\numberwithin{figure}{section}
\newcommand\norm[1]{\left\lVert#1\right\rVert}
\theoremstyle{plain}
\newtheorem{theorem}{Theorem}
\newtheorem{corollary}{Corollary}[theorem]
\theoremstyle{definition}
\newtheorem{definition}{Definition}[section]
\theoremstyle{remark}
\begin{document}
\title{Geodesic Sandwich Theorem with an Application}
\author[A. A. Shaikh, R. P. Agarwal and C. K. Mondal]{Absos Ali Shaikh$^{1\dag}$, Ravi P Agarwal$^2$ and Chandan Kumar Mondal$^{*3}$}

\begin{abstract}
The main goal of the paper is to prove the sandwich theorem for geodesic convex functions in a complete Riemannian manifold. Then by using this theorem we have proved an inequality in a manifold with bounded sectional curvature. Finally, we have shown that the gradient of a convex function is orthogonal to the tangent vector at some point of any geodesic. 
\end{abstract}
\noindent\footnotetext{
$\mathbf{2010}$\hspace{5pt}Mathematics\; Subject\; Classification: 26B25, 39B62, 52A20, 52A30, 52A41, 53C22.\\ 
{Key words and phrases: Convex functions, sandwich theorem, separating convex functions\\}
$^*$The third author greatly acknowledges to The University Grants Commission, Government
of India for the award of Junior Research Fellowship.\\
 $^\dagger$ Corresponding author. }
\maketitle

\section{introduction}
The study of convex function is very important in mathematics, especially, in optimization theory, since many objective functions are convex in a sufficiently small neighborhood of a point which is local minimum. But there are some cases where the objective functions fail to be convex, hence the generalization of convex function becomes necessary. Again to tackle the optimization problem in non-linear space, the notion of convexity in Euclidean space is not sufficient. Hence the concept of convexity has been generalized from Euclidean space to manifold and developed the notion of geodesic convexity, see \cite{AAS11}, \cite{IAA12}, \cite{CK17}. A full discussion about geodesic convexity on a complete Riemannian manifold can be found in \cite{RAP97}, \cite{UDR94}. 
\par The paper is organized as follows. Section 2 deals with some well known facts of Riemannian manifolds and geodesic convexity. In section 3 we have proved the geodesic sandwich theorem as the main result in this paper and as an application of this Theorem we obtain an inequality ( see Theorem 4). In the last section we show that the gradient of a convex function is orthogonal to the tangent of a geodesic in some point.
\section{Preliminaries}
In this section we have discussed some basic facts of a Riemannian manifold $(M,g)$, which will be used throughout this paper (for reference see \cite{JOS11}). Throughout this paper by $M$ we mean a complete Riemannian manifold of dimension $n$ endowed with some positive definite metric $g$ unless otherwise stated. The tangent space at the point $p\in M$ is denoted by $T_pM$ and the tangent bundle is defined by $TM=\cup_{p\in M}T_pM$. The length $l(\gamma)$ of the curve $\gamma:[a,b]\rightarrow M$ is given by
\begin{eqnarray*}
l(\gamma)&=&\int_{a}^{b}\sqrt{g_{\gamma(t)}(\dot{\gamma}(t),\dot{\gamma}(t))}\ dt\\
&=&\int_{a}^{b}\norm {\dot{\gamma}(t)}dt.
\end{eqnarray*}
The curve $\gamma$ is said to be a geodesic if $\nabla_{\dot{\gamma}(t)}\dot{\gamma}(t)=0\ \forall t\in [a,b]$, where $\nabla$ is the Riemannian connection of $g$.
For any point $p\in M$, the exponential map $exp_p:V_p\rightarrow M$ is defined by
$$exp_p(u)=\sigma_u(1),$$
where $\sigma_u$ is a geodesic with $\sigma(0)=p$ and $\dot{\sigma}_u(0)=u$ and $V_p$ is a collection of vectors of $T_pM$ such that for each element $u\in V_p$, the geodesic with initial tangent vector $u$ is defined on $[0,1]$. It can be easily seen that for a geodesic $\sigma$, the norm of a tangent vector is constant, i.e., $\norm {\dot{\gamma}(t)}$ is constant. If the tangent vector of a geodesic is of unit norm, then the geodesic is called normal. If the exponential map exp is defined at all points of $T_pM$ for each $p\in M$, then $M$ is called complete. Hopf-Rinow theorem provides some equivalent cases for the completeness of $M$. Let $x$, $y\in M$. The distance between $p$ and $q$ is defined by
$$d(x,y)=\inf\{l(\gamma):\gamma \text{ be a curve joining }x \text{ and }y\}.$$
A geodesic $\sigma$ joining $x$ and $y$ is called minimal if $l(\sigma)=d(x,y)$. Hopf-Rinow theorem guarantees the existence of minimal geodesic between two points of $M$. A smooth vector field is a smooth function $X:M\rightarrow TM$ such that $\pi\circ X=id_M$, where $\pi:TM\rightarrow M$ is the projection map. The gradient of a smooth function $f:M\rightarrow\mathbb{R}$ at the point $p\in M$ is defined by $\nabla f(p)=g^{ij}\frac{\partial f}{\partial x_{j}}\frac{\partial}{\partial x_i}\mid_p.$
\begin{definition}\cite{UDR94}
 A real valued function $f$ on $M$ is called convex if for every geodesic $\gamma:[a,b]\rightarrow M$, the following inequality holds
\begin{equation*}
f\circ\gamma((1-t)a+tb)\leq (1-t)f\circ\gamma(a)+tf\circ\gamma(b)\quad \forall t\in [0,1].
\end{equation*}
\end{definition}
\section{Geodesic Sandwich theorem}
 Two functions $f,h:M\rightarrow\mathbb{R}$ is said to satisfy the property $(*)$ if the following relation holds
\begin{equation*}
(*)\qquad\qquad\qquad\qquad f(\sigma_{xy}(t))\leq (1-t)h(x)+th(y)\quad \forall x,y\in M,\qquad\qquad\qquad
\end{equation*}
where $\sigma:[0,1]\rightarrow M$ is a geodesic such that $\sigma(0)=x$ and $\sigma(1)=y$.\\
\indent In 1994, Baron et. al. \cite{BMN94} proved the following theorem, which is known as the \textit{sandwich theorem}.
\begin{theorem}[Sandwich Theorem]\cite{BMN94}
Two real valued functions $f$ and $h$ defined on a real interval $I$, satisfy $(*)$ if and only if there is a convex function $k:I\rightarrow\mathbb{R}$ such that
$$f(x)\leq k(x)\leq h(x),\quad \forall x\in I.$$
\end{theorem}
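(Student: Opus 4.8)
The plan is to prove both implications of the equivalence, using that on the interval $I \subseteq \mathbb{R}$ the geodesic $\sigma_{xy}$ from $x$ to $y$ is just the segment $\sigma_{xy}(t) = (1-t)x + ty$, so that property $(*)$ reads $f((1-t)x+ty) \le (1-t)h(x) + th(y)$ for all $x,y \in I$ and $t \in [0,1]$, and the notion of convexity from the Definition reduces to ordinary convexity of a real function on $I$.

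The direction ``existence of $k$ implies $(*)$'' I expect to be routine. Assuming a convex $k$ with $f \le k \le h$, I would simply chain three inequalities,
\[
f((1-t)x+ty) \le k((1-t)x+ty) \le (1-t)k(x)+tk(y) \le (1-t)h(x)+th(y),
\]
using $f \le k$, then the convexity of $k$, then $k \le h$. This yields $(*)$ at once.

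The substantial direction is the converse. Assuming $(*)$, I would construct the mediating function explicitly as the \emph{convex envelope} (greatest convex minorant) of $h$, which on a one-dimensional domain admits the two-point representation
\[
k(x) := \inf\{(1-\lambda)h(u) + \lambda h(v) : u,v \in I,\ \lambda \in [0,1],\ (1-\lambda)u + \lambda v = x\}.
\]
Taking $u=v=x$ shows $k(x) \le h(x)$, and $k$ is convex, being by construction the largest convex function dominated by $h$. The reduction to only two points (rather than arbitrary convex combinations) is where I would invoke Carath\'eodory's theorem applied to the epigraph of $h$ in $\mathbb{R}^2$: a point on the lower boundary of the convex hull of a planar set lies on a segment joining at most two extreme points, and those extreme points must sit on the graph of $h$.

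The inequality $f \le k$ is the heart of the argument and the place where $(*)$ enters. Fixing $x$ and any admissible representation $x = (1-\lambda)u + \lambda v$, property $(*)$ applied along the geodesic from $u$ to $v$ gives $f(x) = f((1-\lambda)u+\lambda v) \le (1-\lambda)h(u)+\lambda h(v)$; taking the infimum over all such representations yields $f(x) \le k(x)$. Together with $k \le h$ and the convexity of $k$ this completes the sandwich, and finiteness is automatic since $f$ is real-valued forces $k \ge f > -\infty$ while $k \le h < \infty$. I expect the main obstacle to be precisely the legitimacy of the two-point reduction: one must verify that the convex envelope of $h$ genuinely coincides with this two-point infimum and is convex, so that $(*)$ — which only controls $f$ against the \emph{two-point} chords of $h$ — is in fact strong enough to dominate the entire envelope.
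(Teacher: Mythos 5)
Your proposal is correct and matches the paper's method: the paper quotes this theorem from \cite{BMN94} without proof, but its own proof of the Geodesic Sandwich Theorem (Theorem 2) specializes on an interval to exactly your construction, since $k(x)=\inf\{y:(x,y)\in E\}$ with $E$ the union of segments joining points of $epi(h)$ is precisely your two-point infimum $\inf\{(1-\lambda)h(u)+\lambda h(v):(1-\lambda)u+\lambda v=x\}$, and the easy direction is the same chain of three inequalities. The one step you leave sketched --- convexity of this two-point infimum --- is the same point the paper disposes of by asserting that $E$ is closed under taking segments between its points, and your route via Carath\'eodory/the lower boundary of a planar convex hull (reduce a four-point combination coming from near-optimal representations of $x_1$ and $x_2$ back to a two-point one without increasing the height) is a legitimate, indeed cleaner, way to close it.
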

Later this theorem has been extended and generalized in Euclidean space by various authors, see \cite{MM17,LMMQ17,F16, NW95}, but till now no work has been done in manifold. In this section we have extended this result in $M$. The main theorem is as follows:
\begin{theorem}[Geodesic Sandwich Theorem]
Let $f,h:M\rightarrow\mathbb{R}$ be two functions. Then $f$ and $h$ satisfy the property $(*)$ if and only if there exists a geodesic convex function $k:M\rightarrow\mathbb{R}$ such that 
\begin{equation}\label{eq1}
f(x)\leq k(x)\leq h(x),\quad \forall x\in M.
\end{equation}
\end{theorem}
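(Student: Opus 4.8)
The plan is to treat the two implications separately, with the forward implication (existence of the interpolant $\Rightarrow$ property $(*)$) being routine and the converse carrying all the difficulty.

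For the direction ``a convex $k$ exists $\Rightarrow (*)$'' I would argue directly and without using completeness. Fix $x,y\in M$ and a geodesic $\sigma=\sigma_{xy}\colon[0,1]\to M$ with $\sigma(0)=x$, $\sigma(1)=y$. Applying the left inequality $f\le k$ at the point $\sigma(t)$, then the geodesic convexity of $k$ along $\sigma$ (the Definition with $a=0$, $b=1$), and finally the right inequality $k\le h$ at the two endpoints, I obtain
\begin{equation*}
f(\sigma(t))\le k(\sigma(t))\le (1-t)k(x)+t\,k(y)\le (1-t)h(x)+t\,h(y),
\end{equation*}
which is exactly $(*)$.

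For the converse, the plan is to construct the interpolant as the largest geodesically convex minorant of $h$, namely
\begin{equation*}
k(x):=\sup\{\psi(x):\psi\colon M\to\mathbb{R}\text{ geodesically convex},\ \psi\le h\text{ on }M\}.
\end{equation*}
Two of the three required properties are then essentially free: every competitor satisfies $\psi\le h$, so $k\le h$; and a pointwise supremum of geodesically convex functions is again geodesically convex, since convexity is tested one geodesic at a time and the supremum of convex functions of a single real variable is convex. This leaves two technical preliminaries, namely that the family of test functions is nonempty and that the supremum is finite; I would secure these first, exhibiting a constant (or affine) minorant and an upper barrier produced by $(*)$, and using Hopf--Rinow to guarantee that minimal geodesics join any two points so that the competitor class is rich enough.

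The whole force of the theorem then lies in the remaining inequality $f\le k$, and this is where I expect the main obstacle. To prove $f(p)\le k(p)$ one must show that $f(p)$ does not exceed the value at $p$ of the convex hull of $h$; expressing that hull via near-optimal convex combinations $p=\sum_i\lambda_i x_i$, the task reduces to establishing $f(p)\le\sum_i\lambda_i h(x_i)$. Property $(*)$ delivers this at once only for two-point representations $p=\sigma_{x_1x_2}(t)$, so the crux is to upgrade the two-point hypothesis to the genuinely higher-dimensional convex-hull estimate. I would attempt this by fixing a geodesic through $p$, invoking the one-dimensional Sandwich Theorem (Theorem 1) on that geodesic to interpolate between $f\circ\sigma$ and $h\circ\sigma$, and then exploiting the extendability of geodesics in the complete manifold (the ``far endpoint carrying vanishing weight'' device) to realise multi-point combinations as limits of two-point ones. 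The step I expect to be genuinely delicate is precisely this reconciliation: in dimension $n\ge 2$ Carath\'eodory's theorem may require up to $n+1$ points, and two-point data along a single geodesic need not control the full convex hull, so one must either push the limiting construction through or isolate the growth/boundedness conditions on $h$ under which the two-point envelope coincides with the convex envelope. That reduction is the heart of the argument.
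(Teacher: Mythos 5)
Your easy direction is correct and is exactly the computation in the paper. The problem is the converse, and there your proposal stops precisely where the proof has to start. Defining $k$ as the pointwise supremum of all geodesically convex minorants of $h$ makes $k\le h$ and the convexity of $k$ essentially automatic, but it makes the remaining inequality $f\le k$ logically equivalent to the theorem itself: the largest convex minorant of $h$ dominates $f$ if and only if \emph{some} geodesically convex function lies between $f$ and $h$. So the construction buys nothing; the entire content of the theorem is concentrated in the step you explicitly leave open (``one must either push the limiting construction through or isolate the growth/boundedness conditions on $h$''). A sketch that ends by naming the obstruction --- multi-point convex combinations versus the purely two-point hypothesis $(*)$ --- without overcoming it is a genuine gap, not a proof. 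There is also a secondary unresolved point: your competitor class may be empty. Property $(*)$ gives $h\ge f$ but not that $h$ is bounded below, and on many manifolds (compact ones, or more generally those of finite volume, as the paper's own Corollary notes) the only smooth geodesically convex functions are constants; so even the well-definedness and finiteness of your $k$ are not secured by the remarks you offer.

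The paper sidesteps the multi-point problem by building $k$ from the other side. It takes $E$ to be the union of all geodesic segments in the product manifold $M\times\mathbb{R}$ joining pairs of points of $\mathrm{epi}(h)$, and sets $k(x)=\inf\{y:(x,y)\in E\}$. Every point of $E$ has the form $(\sigma_{x_1x_2}(t_0),(1-t_0)y_1+t_0y_2)$ with $y_i\ge h(x_i)$, so property $(*)$ applies verbatim to each such point and yields $f\le k$ with no Carath\'eodory-type argument at all, while $k\le h$ follows from degenerate segments. The cost is transferred to the convexity of $k$, which for a one-step segment hull is the delicate point (and is where the paper's own argument is in fact terse: it needs the near-optimal $(x_i,y_i)$ to lie in $\mathrm{epi}(h)$, not merely in $E$). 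The lesson for salvaging your approach is that the interpolant must be manufactured directly out of two-point geodesic data, so that $(*)$ can be quoted pointwise, rather than defined as an envelope whose values a priori involve arbitrary convex combinations.
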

\begin{proof}
Suppose that $f$ and $h$ satisfy the property $(*)$. The epigraph $epi(h)$ of the function $h$ is defined by
$$epi(h)=\{(x,y)\in M\times\mathbb{R}:h(x)\leq y\}.$$
Now $M$ is complete and $\mathbb{R}$ is also complete, hence their product $M\times\mathbb{R}$ is also a complete Riemannian manifold. So, for any two points $p,q\in M\times\mathbb{R}$, there always be a geodesic $\sigma_{pq}:[0,1]\rightarrow M\times\mathbb{R}$ from $p$ to $q$ and each geodesic can be written as
$$\sigma_{pq}(t)=(\sigma_{p_1q_1}(t),(1-t)p_2+tq_2),$$
where $p=(p_1,p_2)$ and $q=(q_1,q_2)$ such that $p_1,q_1\in M$ and $p_2,q_2\in \mathbb{R}$.
Consider the following set
$$E=\bigcup_{u,v\in epi(h)}\{G_h(u,v)\},$$
where $G_h(u,v)=\{\sigma_{uv}(t)\in M\times\mathbb{R}:\sigma_{uv}(0)=u,\ \sigma_{uv}(1)=v,t\in [0,1]\}.$ Now take $(x,y)\in E$, then $(x,y)\in G_h(u,v)$ for some $u,v\in epi(h)$. Also we get
$$h(x_1)\leq y_1 \text{ and }h(x_2)\leq y_2,$$
for $u=(x_1,y_1)$ and $v=(x_2,y_2)$. Since $(x,y)\in G_h(u,v)$, so we get
$$(x,y)=\sigma_{uv}(t_0)=(\sigma_{x_1x_2}(t_0),(1-t_0)y_1+t_0y_2) \text{ for some }t_0\in [0,1].$$
Now let $y_0=(1-t_0)h(x_1)+t_0h(x_2)$. Then
\begin{equation*}
y=(1-t_0)y_1+t_0y_2\geq (1-t_0)h(x_1)+t_0h(x_2)=y_0.
\end{equation*}
Also from the property $(*)$, we get $y\geq y_0\geq f(x)$. Now define the function $k:M\rightarrow\mathbb{R}$ by 
$$k(x)=\inf\{y\in\mathbb{R}:(x,y)\in E\} \text{ for }x\in M.$$ We shall prove that $k$ is a geodesic convex function. Now for each $x_1,x_2\in M$ ad $t\in [0,1]$ we can choose $y_1$ and $y_2$ sufficiently big such that $(x_1,y_1),(x_2,y_2)\in E$, then we have
$$(\sigma_{x_1x_2}(t),(1-t)y_1+ty_2)\in E\ \forall t\in [0,1].$$
Now $k$ satisfies the inequality
$$k(\sigma_{x_1x_2}(t))\leq (1-t)y_1+ty_2.$$ And from the definition of $k$, we get
$$k(\sigma_{x_1x_2}(t))\leq (1-t)k(x_1)+tk(x_2).$$ So $k$ is a geodesic convex function.\\
\indent Conversely suppose that there is a convex function $k:M\rightarrow\mathbb{R}$ which satisfy the inequality $(\ref{eq1})$. Then for a geodesic $\sigma_{xy}:[0,1]\rightarrow M$ with initial point $x$ and final point $y$, we get
\begin{eqnarray*}
f(\sigma_{xy}(t))&\leq& k(\sigma_{xy}(t))\\
&\leq & (1-t)k(x)+tk(y)\\
&\leq & (1-t)h(x)+th(y).
\end{eqnarray*}
Hence we get our result.
\end{proof}
\begin{definition}
Let $f,h:M\rightarrow\mathbb{R}$ be two functions realizing the property $(*)$. Then the convex function $k:M\rightarrow\mathbb{R}$ that satisfies the inequality (\ref{eq1}) is said to be the \textit{separating convex function }for $f$ and $h$.
\end{definition}
The above theorem  guarantees the existence of separating convex function for any two functions, satisfying the property $(*)$. Throughout the paper we consider that the separating convex function is smooth. Since any smooth convex function in a manifold with finite volume is constant \cite{BN69}, so we can state the following result: 
\begin{corollary}
If $M$ possesses finite volume and $f,h:M\rightarrow\mathbb{R}$ are two functions satisfying the property $(*)$, then there is a constant $c\in\mathbb{R}$ such that 
$$f(x)\leq c\leq h(x)\ \forall x\in M.$$ 
\end{corollary}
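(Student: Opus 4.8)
The plan is to obtain the corollary as an immediate specialization of the Geodesic Sandwich Theorem combined with the rigidity statement that smooth convex functions on a finite-volume manifold are constant. First I would apply the Geodesic Sandwich Theorem: since $f$ and $h$ satisfy the property $(*)$, there exists a geodesic convex function $k\colon M\to\mathbb{R}$ with $f(x)\le k(x)\le h(x)$ for all $x\in M$, namely the separating convex function of $f$ and $h$.

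Next I would invoke the standing convention recorded just before the statement, that the separating convex function $k$ is smooth. Then $k$ is a smooth geodesic convex function on the complete Riemannian manifold $M$, which by hypothesis has finite volume. Under precisely these conditions the theorem of \cite{BN69} asserts that $k$ must be constant, so there is some $c\in\mathbb{R}$ with $k\equiv c$.

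Finally I would substitute $k(x)=c$ into the sandwich inequality $f(x)\le k(x)\le h(x)$ to conclude that $f(x)\le c\le h(x)$ for every $x\in M$, which is exactly the desired assertion.

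There is essentially no independent obstacle in this argument: the two substantive inputs, namely the existence of a separating convex function and the constancy of smooth convex functions on finite-volume manifolds, are furnished respectively by the Geodesic Sandwich Theorem and \cite{BN69}. The only point deserving care is to verify that $k$ genuinely satisfies the hypotheses of \cite{BN69}, that is, that it is simultaneously smooth (guaranteed by the stated convention) and geodesic convex (guaranteed by the Geodesic Sandwich Theorem), while the completeness and finite volume of $M$ are already part of the standing assumptions.
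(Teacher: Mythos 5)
Your argument is correct and coincides with the paper's own reasoning: the authors derive this corollary by taking the separating convex function $k$ from the Geodesic Sandwich Theorem, invoking the standing smoothness convention, and applying the result of \cite{BN69} that a smooth convex function on a finite-volume manifold is constant. No further comment is needed.
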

\begin{corollary}
Let $f,h:M\rightarrow\mathbb{R}$ be two functions satisfying the property $(*)$ and $f>0$. If the manifold $M$ contains a closed geodesic, then $f=h$ along any closed geodesic.
\end{corollary}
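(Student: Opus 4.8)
The plan is to reduce the assertion to a rigidity property of convex functions along closed geodesics, in the same spirit as Corollary 1 (where finite volume forced the separating function to be constant). First I would invoke the Geodesic Sandwich Theorem: since $f$ and $h$ satisfy $(*)$, there is a smooth geodesic convex function $k:M\to\mathbb{R}$ with $f(x)\le k(x)\le h(x)$ for all $x\in M$. Let $\gamma:[0,1]\to M$ be a closed geodesic, so that $\gamma(0)=\gamma(1)$ with matching velocities, and $\gamma$ extends to a smooth geodesic that is periodic on $\mathbb{R}$.

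The key step is to show that $k$ is constant along $\gamma$. By the definition of geodesic convexity, the one–variable function $\varphi:=k\circ\gamma$ satisfies the convexity inequality on every subinterval, so it is convex on $\mathbb{R}$; and since $\gamma$ is periodic, $\varphi$ is periodic, hence bounded. A bounded convex function on $\mathbb{R}$ must be constant: if $\varphi(a)<\varphi(b)$ for some $a<b$, convexity forces $\varphi$ to increase without bound to the right, contradicting boundedness, and symmetrically if $\varphi(a)>\varphi(b)$. Thus $\varphi\equiv c$ for some constant $c$, and the sandwich inequality gives $f(\gamma(t))\le c\le h(\gamma(t))$ for every $t$, with $c\ge f>0$.

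It then remains to upgrade these two inequalities to the single equality $f(\gamma(t))=c=h(\gamma(t))$ along the whole loop. The lever is property $(*)$ applied to the once–around geodesic itself: taking $x=y=\gamma(s)$ and letting $\sigma$ be the closed geodesic based at $\gamma(s)$, property $(*)$ yields $f(\gamma(t))\le h(\gamma(s))$ for all $s,t$, and hence $\sup_t f(\gamma(t))\le\inf_s h(\gamma(s))$. I would then try to combine this loop estimate with the sandwich bounds $f\le c\le h$ and the hypothesis $f>0$ to pinch both $f\circ\gamma$ and $h\circ\gamma$ onto the common value $c$.

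I expect this final pinching to be the main obstacle. The estimates obtained so far only trap $f\circ\gamma$ below $c$ and $h\circ\gamma$ above $c$, together with $\sup f\circ\gamma\le\inf h\circ\gamma$; converting this into the pointwise identities $f\circ\gamma=c=h\circ\gamma$ is the delicate part, and it is precisely here that the positivity hypothesis $f>0$ must enter in an essential way. I would therefore concentrate the argument on extracting the reverse bounds $h(\gamma(t))\le c$ and $f(\gamma(t))\ge c$ from $(*)$ and positivity, after which $f=h$ along $\gamma$ is immediate.
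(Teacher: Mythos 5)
Your first two steps reproduce the paper's entire argument: the paper's proof is a single sentence invoking the Bishop--O'Neill fact that a convex function restricted to a closed geodesic is constant (the paper says ``vanishes'') and then asserting the conclusion directly from $f\le k\le h$. So your reduction via the Geodesic Sandwich Theorem and the observation that a periodic convex function on $\mathbb{R}$ is constant is exactly the paper's route, carried out in more detail.

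The ``pinching'' step that you flag as the main obstacle is a genuine gap, and you are right to be suspicious of it: it cannot be closed, because the corollary is false as stated. Take $M$ to be a flat torus (or a cylinder $S^1\times\mathbb{R}$), $f\equiv 1$ and $h\equiv 2$. Then $(*)$ holds trivially since $1\le (1-t)\cdot 2+t\cdot 2$, we have $f>0$, and $M$ contains closed geodesics, yet $f\ne h$ everywhere. Your auxiliary estimate $\sup_t f(\gamma(t))\le\inf_s h(\gamma(s))$, obtained by applying $(*)$ with $x=y=\gamma(s)$ along the loop, is correct but (as this example shows) compatible with strict inequality, and the hypothesis $f>0$ gives no leverage toward the reverse bounds $h\circ\gamma\le c$ and $f\circ\gamma\ge c$. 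The paper's own proof commits precisely the non sequitur you identified: from $f\le c\le h$ along $\gamma$ one cannot conclude $f=h$ along $\gamma$. (The only reading under which the paper's argument is formally valid is to take ``vanishes'' literally, i.e.\ $k\equiv 0$ on $\gamma$; then $f\le 0$ contradicts $f>0$, the hypotheses are inconsistent, and the conclusion holds vacuously --- but that premise is itself false, since nonzero constants are convex.) In short: your proposal is faithful to the paper up to the point where the paper's proof breaks, and the missing step is not a technical difficulty but an actual falsehood.
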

\begin{proof}
Since a convex function along a closed geodesic vanishes \cite{BN69}, so from the equation $(\ref{eq1})$ we get our desired result.
\end{proof}
\begin{theorem}\cite{19}\label{th1}
Let $B_R(p)$ be a geodesic ball in $M$. Suppose that the sectional curvature $K_M\leq k$ for some constant $k$ and $R<inj(M,g)$. Then for any real valued smooth function $f$ with $\Delta f\geq 0$ and $f\geq 0$ on $M$,
\begin{equation}
f(p)\leq \frac{1}{V_k(R)}\int_{B_R(p)}f dV,
\end{equation}
where $V_k(R)=\text{Vol}(B_R,g_k)$ is the volume of a ball of radius $R$ in the space form of constant curvature $k$, $dV$ is the volume form and $inj(M,g)$ is the injective radius of $M$.
\end{theorem}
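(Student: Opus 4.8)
The plan is to work in geodesic polar coordinates centred at $p$, which are available on the whole of $B_R(p)$ precisely because $R<\operatorname{inj}(M,g)$: the exponential map $\exp_p$ is then a diffeomorphism from the ball of radius $R$ in $T_pM$ onto $B_R(p)$, and the distance function $r(x)=d(p,x)$ is smooth on $B_R(p)\setminus\{p\}$. Writing the volume form as $dV=\mathcal{A}(r,\theta)\,dr\,d\theta$, with $\theta$ ranging over the unit sphere $S^{n-1}\subset T_pM$ and $dA=\mathcal{A}(r,\theta)\,d\theta$ the induced measure on the geodesic sphere $\partial B_r(p)$, I would first prove a mean-value inequality on geodesic spheres and then integrate it in the radial variable to reach the stated bound on the ball.

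The central object is the normalized spherical average
\[
\psi(r)=\frac{1}{A_k(r)}\int_{\partial B_r(p)}f\,dA,
\]
where $A_k(r)$ denotes the area of the geodesic sphere of radius $r$ in the space form of constant curvature $k$; the aim is to show $\psi$ is non-decreasing. Differentiating under the integral and using the standard identity $\partial_r\mathcal{A}=\mathcal{A}\,\Delta r$ (the logarithmic radial derivative of the area density equals the Laplacian of the distance function, i.e.\ the mean curvature of the geodesic sphere), the derivative $\psi'(r)$ splits into two pieces. The first is $\frac{1}{A_k(r)}\int_{\partial B_r(p)}\partial_r f\,dA$, which by the divergence theorem equals $\frac{1}{A_k(r)}\int_{B_r(p)}\Delta f\,dV$ and is therefore $\geq 0$ by hypothesis. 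The second piece carries the factor $\Delta r-A_k'(r)/A_k(r)=\Delta r-\Delta_k r$, where $\Delta_k r$ is the Laplacian of the distance function in the model.

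The crux, and the only place the curvature hypothesis is used, is to sign this second piece. Here I would invoke the Hessian (equivalently Laplacian) comparison theorem: an upper sectional-curvature bound $K_M\leq k$ forces geodesic spheres to spread at least as fast as in the model, so $\Delta r\geq \Delta_k r$ throughout $B_R(p)$; combined with $f\geq 0$ and $\mathcal{A}>0$ this makes the second piece non-negative, whence $\psi'(r)\geq 0$. Since both $\int_{\partial B_r(p)}f\,dA$ and $A_k(r)$ are asymptotic to $\omega_{n-1}f(p)\,r^{n-1}$ as $r\to 0^+$, we get $\psi(0^+)=f(p)$, and monotonicity yields
\[
\int_{\partial B_r(p)}f\,dA\geq f(p)\,A_k(r),\qquad 0<r\leq R.
\]

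Finally, integrating this inequality in $r$ over $[0,R]$, the coarea formula collapses the left-hand side to $\int_{B_R(p)}f\,dV$, while $\int_0^R A_k(r)\,dr=V_k(R)$ on the right, giving $\int_{B_R(p)}f\,dV\geq f(p)\,V_k(R)$, which is exactly the asserted estimate. The points I expect to need care are the justification of differentiation under the integral sign (licensed by the uniform smoothness of $f$ and of the metric density on $\overline{B_r(p)}$ for $r<R$, itself a consequence of $R<\operatorname{inj}(M,g)$) and the correct orientation of the comparison inequality, since it is the \emph{upper} curvature bound that produces a \emph{lower} bound for $\Delta r$ and hence for volume growth.
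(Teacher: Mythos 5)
The paper gives no proof of this statement---it is quoted from Schoen--Yau \cite{19} as a known result---so there is no in-paper argument to measure yours against; what you have written is the standard proof, and it is essentially correct. The monotonicity of the normalized spherical mean $\psi(r)=A_k(r)^{-1}\int_{\partial B_r(p)}f\,dA$ via $\partial_r\mathcal{A}=\mathcal{A}\,\Delta r$, Green's identity turning $\int_{\partial B_r(p)}\partial_r f\,dA$ into $\int_{B_r(p)}\Delta f\,dV\geq 0$, the Laplacian comparison $\Delta r\geq\Delta_k r$ from $K_M\leq k$, the limit $\psi(0^+)=f(p)$, and the final coarea integration are all in the right order, and you correctly identified both the direction of the comparison (upper curvature bound $\Rightarrow$ lower bound on $\Delta r$) and the precise points where $f\geq 0$ and $R<\operatorname{inj}(M,g)$ are used. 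One caveat you should make explicit when invoking Hessian/Laplacian comparison: for $k>0$ the inequality $\Delta r\geq\Delta_k r=(n-1)s_k'(r)/s_k(r)$ is only meaningful for $r<\pi/\sqrt{k}$, where $s_k>0$, and $R<\operatorname{inj}(M,g)$ does not by itself guarantee this (consider a large flat torus compared against $k=1$); the restriction is implicit in the theorem, since otherwise $V_k(R)$ is not the volume of a genuine ball in the model space, but your proof should record it as a hypothesis on $R$ rather than derive it from the injectivity radius.
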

\begin{theorem}
Let $M$ has sectional curvature $K_M\leq s$, for some constant $s$. If $k$ is a separating convex function for $f$ and $h$ and $k\geq 0$, then for any $p\in M$
$$f(p)\leq \frac{3\omega_M}{2V_k}h(r), \text{ for some }r\in \partial B(p,\xi),$$
where $B(p,\xi)$ is the a ball in $M$ with the center $p$ and radius $\xi$ such that $\overline{B(p,\xi)}\subset B(p,R)$ for $R<inj(M)$, $\omega_M$ is the surface area of $\overline{B(p,\xi)}$ and $V_s$ is the volume of the ball $B(\xi)$ of radius $\xi$ in the space form of constant cuvature $s$. 
\end{theorem}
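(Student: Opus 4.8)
The plan is to chain the defining inequality of the separating convex function with the sub-mean-value estimate of Theorem \ref{th1}. Since $k$ is the separating convex function for $f$ and $h$, the Geodesic Sandwich Theorem gives
\[ f(x) \le k(x) \le h(x), \qquad \forall\, x \in M. \]
First I would observe that a smooth geodesically convex function has positive semidefinite Hessian: restricting to any geodesic $\gamma$ shows $\frac{d^2}{dt^2}(k\circ\gamma)=\text{Hess}\,k(\dot\gamma,\dot\gamma)\ge 0$, so $\text{Hess}\,k \ge 0$ as a symmetric form, whence $\Delta k = \operatorname{tr}(\text{Hess}\,k)\ge 0$. Thus $k$ is subharmonic, and combined with the standing hypothesis $k\ge 0$ this places $k$ exactly in the class of functions to which Theorem \ref{th1} applies.

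Next, on the ball $B(p,\xi)$ the curvature bound $K_M\le s$ holds and $\xi < inj(M)$, because $\overline{B(p,\xi)}\subset B(p,R)$ with $R<inj(M)$. Applying Theorem \ref{th1} to $k$ on this ball yields
\[ k(p) \le \frac{1}{V_s}\int_{B(p,\xi)} k\, dV, \]
where $V_s$ is the volume of the radius-$\xi$ ball in the space form of curvature $s$. Chaining this with $f(p)\le k(p)$ reduces the entire statement to estimating the average of $k$ over the ball by a single boundary value of $h$.

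The remaining and most delicate step is to pass from the volume average to the claimed quantity $\tfrac{3\omega_M}{2V_s}h(r)$ for one point $r\in\partial B(p,\xi)$. Here I would exploit the convexity of $k$, which forces its maximum over the compact set $\overline{B(p,\xi)}$ to be attained on the boundary, so that $\int_{B(p,\xi)} k\, dV \le \big(\max_{\partial B(p,\xi)} k\big)\,\text{Vol}(B(p,\xi))$; then use $k\le h$ together with the mean value theorem for continuous functions on the connected sphere $\partial B(p,\xi)$ to produce the point $r$ realizing the relevant average. Converting $\text{Vol}(B(p,\xi))$ into the surface area $\omega_M$ (via the coarea formula, using the geometric hypotheses on $\xi$) is what should supply the numerical factor $\tfrac{3}{2}$.

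I expect this last step to be the main obstacle. The conceptual chain sandwich $\to$ subharmonicity $\to$ Theorem \ref{th1} is routine, but pinning down the exact constant $\tfrac{3}{2}$ and reducing the ball integral to a single boundary value $h(r)$ requires the careful interplay of the maximum principle for the convex function $k$, the mean value theorem on $\partial B(p,\xi)$, and the volume-versus-surface-area comparison, and this is precisely where the hypotheses on $\xi$, $\omega_M$ and $V_s$ must be used.
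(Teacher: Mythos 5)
Your opening chain (sandwich inequality, convexity $\Rightarrow$ $\mathrm{Hess}\,k\ge 0$ $\Rightarrow$ $\Delta k\ge 0$, then Theorem \ref{th1} applied to $k$ on $B(p,\xi)$) matches the paper's proof exactly, and your Hessian argument for subharmonicity is a nice self-contained replacement for the paper's citation of Greene--Wu. But the final step, which you yourself flag as the main obstacle, is a genuine gap, and the route you sketch for it would not close it. Bounding $\int_{B(p,\xi)}k\,dV$ by $\bigl(\max_{\partial B(p,\xi)}k\bigr)\,\mathrm{Vol}(B(p,\xi))$ and then trying to convert $\mathrm{Vol}(B(p,\xi))$ into $\tfrac{3}{2}\omega_M$ via the coarea formula cannot work: there is no general identity or inequality relating the volume of a geodesic ball to $\tfrac32$ times the area of its boundary sphere (dimensional analysis alone shows the ratio carries a length scale), so the factor $\tfrac32$ cannot come from there.

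What the paper actually does is decompose the ball integral radially,
\begin{equation*}
\int_{B(p,\xi)}k\,dV \;=\;\int_{\partial B(p,\xi)}\Bigl\{\int_{0}^{1}k\circ\sigma_x(t)\,dt\Bigr\}dx,
\end{equation*}
where $\sigma_x$ is the minimal geodesic from $p$ to $x\in\partial B(p,\xi)$, and then apply the convexity of $k$ \emph{along each radial geodesic}: $k(\sigma_x(t))\le(1-t)k(p)+tk(x)$. Integrating in $t$ gives $\tfrac12\bigl(k(p)+k(x)\bigr)$, and bounding both $k(p)$ and the boundary integral of $k(x)$ by $k(r)=\max_{\overline{B(p,\xi)}}k$ (attained on the boundary by the maximum principle for the subharmonic $k$) produces the two terms $\tfrac{\omega_M}{2V_s}k(r)$ and $\tfrac{\omega_M}{V_s}k(r)$ whose sum is the constant $\tfrac{3\omega_M}{2V_s}$; the conclusion then follows from $k(r)\le h(r)$. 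This use of convexity along radial geodesics, rather than merely the maximum principle, is the missing idea in your proposal. (You should also be aware that the paper's radial decomposition as written omits the Jacobian of geodesic polar coordinates, so even the source's derivation of the constant is not rigorous as stated; but your proposed alternative does not repair this, it simply has no mechanism to produce the claimed constant at all.)
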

\begin{proof}
Since $k$ is the separating convex function for the real valued functions $f$ and $h$ on $M$, so for any $p\in M$, we get
$$f(p)\leq k(p)\leq h(p).$$
Since $k$ is convex, so $k$ is also subharmonic \cite{GW71}. Then from Theorem \ref{th1}, we get
\begin{eqnarray}
k(p)&\leq& \frac{1}{V_s}\int_{ B(p,\xi)}k dV,\\
&=& \frac{1}{V_s}\int_{\partial  B(p,\xi)}\Big\{\int_{0}^{1}k\circ \sigma_x(t)dt\Big\}dx,
\end{eqnarray}
where $\sigma_x:[0,1]\rightarrow M$ is the minimal geodesic such that $\sigma_x(0)=p$ and $\sigma_x(1)=x$ for each $x\in \partial B(p)$. By using convexity of $f$ we obtain
\begin{eqnarray*}
f(p)\leq k(p)&\leq & \frac{1}{V_s}\int_{\partial  B(p,\xi)}\Big\{\int_{0}^{1}((1-t)k(p)+tk(x))dt\Big\}dx\\
&=& \frac{1}{V_s}\int_{\partial  B(p,\xi)}\Big\{k(p)\int_{0}^{1}(1-t)dt+k(x)\int_{0}^{1}tdt\Big\}dx\\
&=& \frac{1}{2V_s}\int_{\partial  B(p,\xi)}(k(p)+k(x)) dx\\
&=& \frac{\omega_M}{2V_s}k(p)+\frac{1}{V_s}\int_{\partial B(p,\xi)}k(x)dx.
\end{eqnarray*}
Since $k$ is subharmonic and $ \overline{B(p,\xi)}$ lies in the normal neighborhood of $p$, so $\max_{ B(p,\xi)}k=k(r)$ for some $r\in\partial  B(p,\xi).$ Hence from the above inequality we get
\begin{eqnarray*}
f(p)&\leq & k(r)\Big \{\frac{\omega_M}{2V_s}+\frac{1}{V_s}\int_{\partial B(p,\xi)}dx\Big\}\\
&=& k(r)\frac{3\omega_M}{2V_k}\leq \frac{3\omega_M}{2V_k}h(r).
\end{eqnarray*}
\end{proof}

\section{Gradient of a convex function along a geodesic}
\begin{theorem}[Mean value theorem]\cite{AFL05}
Let $f:M\rightarrow\mathbb{R}$ be a differentiable function. Then, for every pair of points $p,q\in M$ and every minimal geodesic path $\sigma:[0,1]\rightarrow M$ joining $p$ and $q$, there exists $t_0\in[0,1]$ such that
$$f(p)-f(q)=d(p,q)df_{\sigma(t_0)}(\sigma'(t_0)),$$
in particular $|f(p)-f(q)|\leq \norm {df(\sigma(t_0))}_{\sigma(t_0)}d(p,q).$
\end{theorem}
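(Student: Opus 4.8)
The plan is to reduce the statement to the ordinary one-variable mean value theorem by restricting $f$ to the geodesic $\sigma$. First I would introduce the auxiliary function $\phi:[0,1]\to\mathbb{R}$ defined by $\phi(t)=f(\sigma(t))$. A geodesic is smooth and $f$ is differentiable by hypothesis, so $\phi$ is differentiable on $[0,1]$, and the chain rule identifies its derivative with the differential of $f$ evaluated on the velocity of $\sigma$, namely $\phi'(t)=df_{\sigma(t)}(\sigma'(t))$. This chain-rule identity is the bridge between the one-dimensional setting and the manifold differential that the theorem is really about.

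Next I would apply the classical mean value theorem to $\phi$ on $[0,1]$: there exists $t_0\in(0,1)$ with $\phi(1)-\phi(0)=\phi'(t_0)$, that is, $f(p)-f(q)=df_{\sigma(t_0)}(\sigma'(t_0))$ once $\sigma$ is oriented so that its velocity points in the direction along which $f$ is being differentiated (this orientation, from $q$ to $p$, is what fixes the sign in $f(p)-f(q)$). To produce the factor $d(p,q)$ displayed in the theorem I would invoke the two facts recalled in Section 2: along a geodesic the speed $\norm{\sigma'(t)}$ is constant, and for a minimal geodesic $l(\sigma)=d(p,q)$, so that this constant speed equals $d(p,q)$. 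Writing $\sigma'(t_0)=d(p,q)\,u$ for the corresponding unit tangent vector $u$ and reading the differential in the unit direction then moves the length out as the scalar $d(p,q)$ and gives the stated equality.

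For the concluding inequality I would pair the differential with the gradient through the metric, $df_{\sigma(t_0)}(\sigma'(t_0))=g_{\sigma(t_0)}(\nabla f(\sigma(t_0)),\sigma'(t_0))$, and apply the Cauchy--Schwarz inequality to obtain $|df_{\sigma(t_0)}(\sigma'(t_0))|\leq \norm{df(\sigma(t_0))}_{\sigma(t_0)}\,\norm{\sigma'(t_0)}$. Since the relevant tangent vector has unit norm, combining this with the equality already established yields $|f(p)-f(q)|\leq \norm{df(\sigma(t_0))}_{\sigma(t_0)}\,d(p,q)$, which is the ``in particular'' assertion.

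I do not anticipate a genuine obstacle, since the argument is a reduction to the one-dimensional mean value theorem. The only points that demand care are verifying the differentiability and the chain-rule formula for $\phi$ along the smooth geodesic, and the bookkeeping of the geodesic's constant speed, so that the scalar $d(p,q)$ surfaces with the correct sign for the chosen orientation of $\sigma$.
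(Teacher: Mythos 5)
The paper itself gives no proof of this statement: it is quoted verbatim from \cite{AFL05} as a known result, so there is nothing internal to compare your argument against. Your reduction to the one-variable mean value theorem via $\phi(t)=f(\sigma(t))$, $\phi'(t)=df_{\sigma(t)}(\sigma'(t))$, is the standard and correct route, and it is essentially how the result is obtained in the cited source. Two remarks on the points you rightly flagged as needing care. First, the normalization: with $\sigma:[0,1]\to M$ and $\sigma'(t_0)$ the actual velocity, the classical MVT yields $f(p)-f(q)=df_{\sigma(t_0)}(\sigma'(t_0))$ with \emph{no} factor $d(p,q)$, since $\|\sigma'(t_0)\|=l(\sigma)=d(p,q)$ is already absorbed into the velocity. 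The displayed identity of the theorem is therefore only correct if $\sigma'(t_0)$ is read as the unit tangent (equivalently, if $\sigma$ is parametrized by arc length), which is exactly the reinterpretation you make by writing $\sigma'(t_0)=d(p,q)\,u$; taken literally as printed, the formula overcounts by a factor of $d(p,q)$. Second, the orientation: the conclusion as stated for ``every minimal geodesic joining $p$ and $q$'' can only hold up to sign, and your choice of running $\sigma$ from $q$ to $p$ is what pins down $f(p)-f(q)$ rather than its negative. Both are defects of the statement rather than of your proof, but since the paper later applies this theorem (in its final section) with $\sigma$ oriented from $p$ to $q$ and with the non-unit velocity, it is worth being explicit that the clean, convention-free conclusion your argument actually delivers is $f(q)-f(p)=df_{\sigma(t_0)}(\sigma'(t_0))$ for $\sigma:[0,1]\to M$ with $\sigma(0)=p$, $\sigma(1)=q$. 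The Cauchy--Schwarz step for the ``in particular'' inequality is fine.
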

\begin{theorem}
Let $f:M\rightarrow\mathbb{R}$ be a convex function and $p\in M$. Then for any $u\in T_pM$ and for any geodesic $\sigma:[0,1]\rightarrow M$ such that $\sigma(0)=p$,  $\sigma'(0)=u$ and $df_p(u)\geq 0$, there exists $t_0\in[0,1]$ such that 
$$df_{\sigma(t_0)}(\sigma'(t_0))=\langle\nabla f(\sigma(t_0)),\sigma'(t_0)\rangle=0.$$ 
\end{theorem}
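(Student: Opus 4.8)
The plan is to transfer the problem to the single real variable $t$ by analysing the composite $\phi(t)=f(\sigma(t))$ on $[0,1]$. Since $\sigma$ is a geodesic and $f$ is convex, applying the defining convexity inequality to $\sigma$ and to each of its subsegments shows that $\phi$ is a convex function of one real variable. As $f$ is differentiable, so is $\phi$, with $\phi'(t)=df_{\sigma(t)}(\sigma'(t))=\langle\nabla f(\sigma(t)),\sigma'(t)\rangle$, and convexity forces $\phi'$ to be continuous and non-decreasing on $[0,1]$. Thus the expression whose vanishing we seek is precisely $g(t):=\phi'(t)$, a continuous monotone function with $g(0)=df_p(u)\ge 0$.

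First I would settle the trivial case $g(0)=0$, for which $t_0=0$ already works. Assuming $g(0)>0$, the strategy is to locate a parameter $s\in(0,1]$ with $g(s)\le 0$ and then apply the intermediate value theorem to the continuous function $g$ on $[0,s]$ to obtain $t_0$ with $g(t_0)=0$. A natural way to manufacture such a value is via the Mean Value Theorem stated above: if one can find a point $q=\sigma(s)$ on the geodesic with $f(q)=f(p)$, then applying that theorem to the minimal geodesic joining $p$ and $q$ yields $df_{\sigma(t_0)}(\sigma'(t_0))=0$ directly, a Rolle-type conclusion. Equivalently, if the convex profile $\phi$ attains its minimum at an interior parameter $t_0$, then $\phi'(t_0)=0$ and we are done.

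The main obstacle is exactly the production of the required non-positive value of $g$. Because $g$ is non-decreasing with $g(0)\ge 0$, one has $g(t)\ge g(0)\ge 0$ for all $t\in[0,1]$, so no sign change of $g$ can be extracted from the behaviour on $[0,1]$ alone; the needed non-positive directional derivative must instead be supplied by the global behaviour of $f$ and by the geodesic at its far endpoint. This is where I would concentrate the argument, for instance by exploiting completeness to examine $f$ in the opposite direction $-u$, where $df_p(-u)\le 0$, or by invoking a point at which the convex function attains its minimum along $\sigma$, and then combining that information with the Mean Value Theorem to pin down the parameter $t_0$ at which the gradient becomes orthogonal to $\sigma'$.
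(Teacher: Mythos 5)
Your proposal stalls at exactly the right place, and the honest conclusion is that the obstacle you identify cannot be overcome. As you observe, $\phi=f\circ\sigma$ is convex on $[0,1]$, so $g=\phi'$ is non-decreasing, and $g(0)=df_p(u)\ge 0$ forces $g(t)\ge g(0)\ge 0$ for every $t\in[0,1]$. If $df_p(u)>0$ this gives $g(t)>0$ throughout, so no $t_0$ with $g(t_0)=0$ can exist; the statement holds only in the degenerate case $df_p(u)=0$, where $t_0=0$ works trivially. A concrete counterexample: $M=\mathbb{R}$, $f(x)=x$, $p=0$, $u=1$, $\sigma(t)=t$, for which $df_{\sigma(t)}(\sigma'(t))=1$ for all $t$. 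Consequently your final paragraph should not be read as a missing step to be filled in by ``global behaviour'' of $f$ or by looking in the direction $-u$ (which changes the geodesic, hence the quantity being evaluated); it is a correct diagnosis that the theorem as stated is false.

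For comparison, the paper combines the convexity inequality $df_p(u)\le f(q)-f(p)$, with $q=\sigma(1)$, and the mean value theorem written as $f(p)-f(q)=d(p,q)\,df_{\sigma(t_0)}(\sigma'(t_0))$, to conclude $df_{\sigma(t_0)}(\sigma'(t_0))\le 0$ and then invoke the intermediate value theorem, as you intended to. The flaw is an orientation/sign error in the second ingredient: for the \emph{same} geodesic $\sigma$ running from $p$ to $q$, the mean value theorem gives $f(q)-f(p)=\phi'(t_0)=df_{\sigma(t_0)}(\sigma'(t_0))$ up to a positive factor coming from the parametrization, not $f(p)-f(q)$. Substituting the correct sign into the convexity inequality yields $df_{\sigma(t_0)}(\sigma'(t_0))\ge df_p(u)\ge 0$, which is exactly the monotonicity you derived and supplies no sign change. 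So your approach is sound precisely where it is carried out, and the point at which you could not complete it is the point at which the paper's own argument goes wrong.
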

\begin{proof}
The function $f:M\rightarrow\mathbb{R}$ is convex, hence for any $u\in T_pM$ there exists a minimal geodesic $\sigma:[0,1]\rightarrow M$ such that $\sigma(0)=p$ and $\sigma'(0)=u$ and the following inequality holds
\begin{equation}\label{eq4}
df_p(u)\leq f(q)-f(p),
\end{equation}
where $q=\sigma(1)$.
Now applying mean value theorem for the function $f$ there exists $t_0\in [0,1]$ such that 
\begin{equation}\label{eq2}
f(p)-f(q)=d(p,q)df_{\sigma(t_0)}(\sigma'(t_0)),
\end{equation}
where $d(p,q)=\inf\{L(\sigma):\sigma(0)=p,\ \sigma(1)=q\text{ and }\sigma \text{ is length minimizing}\}$.
Now combining the equations (\ref{eq4}) and (\ref{eq2}) we get
\begin{equation}\label{eq3}
df_p(u)+d(p,q)df_{\sigma(t_0)}(\sigma'(t_0))\leq 0.
\end{equation}
It is given that $df_p(u)\geq 0$, then from (\ref{eq3}) we get $df_{\sigma(t_0)}(\sigma'(t_0))\leq 0$.
Consider a function $\varphi:[0,1]\rightarrow\mathbb{R}$ defined by $$\varphi(t)=df_{\sigma(t)}(\sigma'(t)),\ \forall t\in[0,1].$$
The function $\varphi$ is continuous and $\varphi(0)\geq 0$ and $\varphi(t_0)\leq 0$, hence there exists $\xi\in[0,1]$ such that $\varphi(\xi)=0$. This implies that $$df_{\sigma(\xi)}(\sigma'(\xi))=\langle\nabla f(\sigma(\xi)),\sigma'(\xi)\rangle=0.$$ 
 Hence we get our result. 
\end{proof}

$\bigskip $

$^{1}$Department of Mathematics,

 The University of Burdwan, Golapbag, Burdwan-713104,

 West Bengal, India.

E-mail:aask2003@yahoo.co.in, aashaikh@math.buruniv.ac.in

$\bigskip $

$^{2}$Department of Mathematics, 

Texas A\& M University-Kingsville, Kingsville, Texas 78363-8202
 
 USA.

E-mail:ravi.agarwal@tamuk.edu
$\bigskip $

$^{3}$Department of Mathematics, 

The University of
 Burdwan, Golapbag, Burdwan-713104,
 
 West Bengal, India.

E-mail:chan.alge@gmail.com
$\bigskip $

\end{document}